\documentclass[hidelinks,onefignum,onetabnum]{siamart251216}
\usepackage{amsmath, amsfonts, amssymb, booktabs}
\usepackage{graphicx, xcolor, bm, soul}
\usepackage{algorithm}
\usepackage{algpseudocode}
\usepackage{url}
\usepackage{enumitem}

\newcommand{\C}{\mathbb{C}}
\newcommand{\R}{\mathbb{R}}

\title{Recovering Complex Unitary Eigenspaces from Real-Valued Embeddings\thanks{Submitted to the editors \today}}

\author{Stefanie Guenther\thanks{Lawrence Livermore National Laboratory, {\email{guenther5@llnl.gov}} }
\and N.~Anders Petersson\thanks{Lawrence Livermore National Laboratory, {\email{petersson1@llnl.gov}}}
}

\begin{document}

\maketitle

\begin{abstract}
We consider the problem of recovering a unitary eigendecomposition of a complex unitary matrix from that of its embedded real-valued formulation. Such formulations arise naturally in scientific computing workflows that employ real-arithmetic solvers by representing complex matrices in term of their real and imaginary parts.
While the reconstruction is trivial when the spectrum of the real-valued embedding is simple, degenerate and/or complex conjugated eigenvalues introduce ambiguities because each eigenspace may include contributions from both the unitary matrix and its complex conjugate. We prove that this ambiguity can always be resolved by applying a structured projection to the eigenspaces of the real-valued embedding, followed by a rank-revealing orthonormalization. The resulting procedure recovers the eigenvalues and an unitary eigenbasis for the original unitary matrix, with correct multiplicities of degenerate eigenvalues. 
\end{abstract}

\begin{keywords}
    complex unitary matrices, eigendecomposition, real-valued embedding, complex-to-real formulations
\end{keywords}

\begin{MSCcodes}
    15A18, 65F15
\end{MSCcodes}

\section{Introduction}
Unitary matrices arise in many areas of science and engineering, such as quantum physics~\cite{nielsen00}, signal processing~\cite{oppenheim2010discrete}, photonics~\cite{Reck-94}, and in numerical linear algebra~\cite{golub2013matrix}, to mention a few. The common thread in these applications is norm preservation: wherever a physical or mathematical transformation must conserve some notion of energy, probability, or information, the natural mathematical object is a unitary matrix.

The eigendecomposition of a unitary matrix $U$ can always be written as $U= V \Lambda V^\dagger$, where $V$ is unitary and the eigenvalues are complex numbers on the unit circle, $\Lambda = \text{diag}(e^{i\theta_1}, \ldots, e^{i\theta_n})$. This decomposition is very useful because it exposes the real-valued phases $\theta_k$ and the unitary eigenbasis $V$. For example, the eigendecomposition of unitary matrices is a natural tool for spectral analysis of periodic or quasi-periodic signals. Another example occurs in quantum physics, where the eigendecomposition of the unitary time-evolution operator $U(t) = \exp(-itH)$ can be used to calculate the Hamiltonian generator $H$ from the matrix logarithm, $\log(U) = V \log(\Lambda) V^\dagger$, subject to the usual branch ambiguities.

While modern distributed memory and large-scale sparse eigensolvers, such as those available in ARPACK~\cite{lehoucq1998arpack}, SLEPc~\cite{Hernandez:2005} or ScaLAPACK~\cite{scalapack97}, support both real and complex arithmetic, practical constraints frequently favor real-valued formulations. In particular, large scientific software stacks are often compiled and optimized for real-valued arithmetic. Switching to complex arithmetic may require recompilation of dependencies, introduce compatibility issues, or lead to performance degradation due to increased memory traffic or reduced efficiency of the underlying kernels. As a result, it is common in high-performance computing (HPC) applications to represent complex vectors and matrices by their real and imaginary parts and to operate on the equivalent real-valued system.

For a unitary matrix $U=A+iB\in\mathbb{C}^{n\times n}$, with $A,B\in\R^{n\times n}$, a real-valued formulation is given by the block matrix,
\begin{align} \label{eq:realvaluedMatrix}
    M = \begin{bmatrix} A & -B \\ B & A \end{bmatrix} \in \R^{2n \times 2n},
\end{align}
which acts on the stacked real and imaginary components of a complex vector. This formulation, referred to as the K1 formulation in the literature~\cite{Day2001ERF}, is widely used in practice. In particular, with respect to the induced Euclidean norm, the conditioning of the real-valued system matches that of the original complex formulation, allowing established real-valued solvers to be applied effectively, including preconditioning techniques tailored to the block-structured system~\cite{axelsson2014comparison}.

In this work, we address the corresponding eigendecomposition problem. Specifically, given an eigendecomposition of the real-valued orthogonal matrix $M$, how can one recover a unitary eigendecomposition of the original complex unitary matrix $U$? 
The answer is immediate when the spectrum of $M$ is simple. In this case, each eigenvector of $M$ corresponds directly to either an eigenvector of $U$, or of its complex-conjugate counterpart $\bar U$, and the desired eigenvectors can be identified by a simple projection. However, repeated eigenvalues introduce ambiguity. Degeneracies may arise from repeated eigenvalues of $U$, from real eigenvalues shared by $U$ and $\bar U$, or from complex-conjugate pairs of eigenvalues. In these cases, an eigensolver may return an arbitrary basis within each eigenspace of $M$, mixing contributions associated with $U$ and $\bar U$, making the reconstruction less obvious. The purpose of this work is to establish that the approach from the simple case can be extended to the degenerate case. We prove that by applying the projection to each eigenspace of the real-valued matrix $M$, followed by a rank-revealing orthonormalization, recovers exactly the eigenspaces of the original unitary matrix with correct multiplicities. The resulting procedure is straightforward to implement, requires no modification of existing real-valued eigensolvers, and is directly applicable in HPC workflows where complex arithmetic is undesirable or unavailable.

\section{Recovering the unitary eigendecomposition by projection}
Let $U\in\mathbb{C}^{n\times n}$ be a unitary matrix, i.e., $U^\dagger U = UU^\dagger = I_n$\footnote{The conjugate transpose of a matrix $A$ is denoted $A^\dagger \equiv \bar{A}^T$.}.
As a normal matrix, $U$ admits a unitary eigendecomposition $U=V\Lambda V^\dagger$ where $V\in \C^{n\times n}$ is unitary and 
$\Lambda=\mbox{diag}(\lambda_1, \dots, \lambda_n)$ contains the eigenvalues of $U$.
Consider the real-valued representation of $U$ given by the orthogonal (and therefore normal) matrix $M \in \R^{2n \times 2n}$ in \eqref{eq:realvaluedMatrix}. 
The following lemma characterizes the eigendecomposition of $M$ in terms of that of $U$. 
\begin{lemma}\label{lemma:eigendecomposition_M}
    Let $U=V\Lambda V^\dagger$ be a unitary eigendecomposition. Then the matrix
    $M$ defined in \eqref{eq:realvaluedMatrix} admits the eigendecomposition
    \begin{align}
        M = \left[ W_+ \quad W_-  \right] \begin{bmatrix}
            \Lambda & 0 \\ 0 & \bar \Lambda
        \end{bmatrix} 
        \begin{bmatrix} W_+ & W_-\end{bmatrix}^\dagger
    \end{align}
    where 
    \begin{align}\label{eq_W+W-}
        W_+ := 
            \frac{1}{\sqrt{2}} \begin{bmatrix}
                V  \\ -iV 
            \end{bmatrix} ,
         \quad W_- := 
            \frac{1}{\sqrt{2}} \begin{bmatrix}
                -i\bar V  \\ \bar V 
            \end{bmatrix} 
    \end{align}
    and $W:=\left[W_+ \quad W_- \right]$ is unitary. Hence, $\sigma(M) = \sigma(U)\cup \sigma(\bar U)$. 
\end{lemma}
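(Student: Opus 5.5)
Direct verification. First check that $W$ is unitary, then show $MW = W\,\mathrm{diag}(\Lambda,\bar\Lambda)$. Together these give $M = W\,\mathrm{diag}(\Lambda,\bar\Lambda)\,W^\dagger$, and the spectral statement follows.

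\emph{Unitarity.} Compute the four blocks of $W^\dagger W$ from the definitions \eqref{eq_W+W-}.
\begin{itemize}
\item $W_+^\dagger W_+ = \tfrac12\,(V^\dagger V + (iV^\dagger)(-iV)) = \tfrac12\,(I+I) = I_n$.
\item $W_-^\dagger W_- = \tfrac12\,((i V^T)(-i\bar V) + V^T\bar V) = I_n$, using $V^T\bar V = \overline{V^\dagger V} = I$.
\item $W_+^\dagger W_- = \tfrac12\,(V^\dagger(-i\bar V) + (iV^\dagger)\bar V) = \tfrac12\,(-iV^\dagger\bar V + iV^\dagger \bar V) = 0$.
\end{itemize}
Hence $W^\dagger W = I_{2n}$. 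Since $W$ is square, it is unitary.

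\emph{Eigen-relations.} Write $M$ as acting on stacked vectors $[x;y]$, and use $U = A+iB$, $\bar U = A - iB$, so that $UV = V\Lambda$ and $\bar U\bar V = \bar V\bar\Lambda$.

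For $W_+$, the top block of $M[V;-iV]$ is $AV + iBV = UV = V\Lambda$. The bottom block is $BV - iAV = -i(A+iB)V = -iV\Lambda$. So $MW_+ = W_+\Lambda$.

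For $W_-$, the top block of $M[-i\bar V;\bar V]$ is $-iA\bar V - B\bar V = -i(A - iB)\bar V = -i\bar V\bar\Lambda$. The bottom block is $-iB\bar V + A\bar V = \bar U\bar V = \bar V\bar\Lambda$. So $MW_- = W_-\bar\Lambda$.

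Combining, $MW = W\,\mathrm{diag}(\Lambda,\bar\Lambda)$. Multiplying on the right by $W^\dagger$ gives the claimed decomposition. The eigenvalues of $M$ are therefore the diagonal entries of $\Lambda$ and $\bar\Lambda$, which are the eigenvalues of $U$ and of $\bar U$ respectively (note $\bar U = \bar V\bar\Lambda V^T$). This gives $\sigma(M)=\sigma(U)\cup\sigma(\bar U)$, with multiplicities adding.

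\emph{Main obstacle.} There is no conceptual obstacle. The only care needed is the sign and conjugation bookkeeping in the $W_-$ block and in the cross term $W_+^\dagger W_-$. In particular, one must use $\bar U\bar V=\bar V\bar\Lambda$, which follows by conjugating $UV=V\Lambda$ since $A$ and $B$ are real.
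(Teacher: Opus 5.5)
Your proof is correct: the blockwise computation of $W^\dagger W$ and the relations $MW_+ = W_+\Lambda$, $MW_- = W_-\bar\Lambda$ all check out, including the conjugation bookkeeping. The paper does the same direct verification, but it organizes the computation through the factorization $W = Q\,\mathrm{diag}(V,\bar V)$ with the fixed unitary $Q = \tfrac{1}{\sqrt2}\begin{bmatrix} I_n & -iI_n\\ -iI_n & I_n\end{bmatrix}$, so that unitarity of $W$ is immediate and the decomposition reduces to the $V$-independent identity $Q\,\mathrm{diag}(U,\bar U)\,Q^\dagger = M$.
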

\begin{proof}
A more general statement is provided in \cite{Freund1992CG}, proposition 5.1 a), and its proof is easily verified by noting that  
\[
W = Q\begin{bmatrix} V & 0 \\0 & \bar V \end{bmatrix},\quad 
Q = \frac 1{\sqrt{2}} \begin{bmatrix}
       I_n & -iI_n \\ -iI_n & I_n 
    \end{bmatrix},
\]
where $Q$ is unitary, and by evaluating $W \begin{bmatrix} \Lambda & 0\\ 0 & \bar \Lambda\end{bmatrix}W^\dagger = 
Q \begin{bmatrix}
    U & 0 \\ 0 & \bar U
\end{bmatrix}Q^\dagger = M.$
\end{proof}

We here consider the inverse problem. Suppose that a real-valued eigensolver has been applied to $M \in \R^{2n\times 2n}$, producing an eigendecomposition 
\begin{align}
 MZ = Z \Sigma, 
\end{align}
where $\Sigma  \in\mathbb{C}^{2n\times 2n}$ is diagonal and $Z\in \C^{2n\times 2n}$ contains a complete eigenbasis. 
We aim to construct a unitary matrix $V\in \C^{n\times n}$ and a diagonal matrix $\Lambda\in C^{n\times n}$ such that $U=V\Lambda V^\dagger$.
We remark that, when the real-valued formulation $M$ is passed to a general-purpose eigensolver operating in real-valued arithmetic, the resulting eigenbasis need not be unitary nor preserve the structural separation between the eigenspaces associated with $U$ and $\bar U$, even though $M$ is normal and therefore admits a unitary eigendecomposition. Here, we consider the general case where the given eigenbasis of $M$ is not orthonormal, and note that orthonormality does not simplify the procedure.

The reconstruction is straightforward when the spectrum of $M$ is simple with a set of $2n$ \textit{distinct} eigenvalues, i.e., if the multiplicity\footnote{$\sigma(M)$ denotes the spectrum of $M$, i.e., the set of eigenvalues of $M$. Since $M$ and $U$ are normal, the algebraic multiplicity of eigenvalues equals the geometric multiplicity, which we simply call the multiplicity, and denote by $m_M(\mu)$ and $m_U(\mu)$.} of each $\mu \in \sigma(M)$ is $m_M(\mu)=1$. In that case, each eigenspace is one-dimensional and, since $M$ is normal, the eigenvectors are mutually orthogonal. 
Each eigenvector $z_\mu$ then corresponds to the distinct eigenvalue $\mu\in\sigma(M)$, and
Lemma \ref{lemma:eigendecomposition_M} shows that it must be of one of two forms: 
\begin{align}
   z_\mu &= \alpha \begin{bmatrix} v \\ -iv \end{bmatrix} \quad \text{if $\mu\in\sigma(U)$}, \\
   \text{or} \quad z_\mu &=  \alpha \begin{bmatrix} -i\bar v \\ \bar v \end{bmatrix} \quad \text{if $\mu\in\sigma(\bar U)$},
\end{align}
where $v$ is an eigenvector of $U$ and $\alpha\neq 0$ is a scaling factor. 
Define the projection operator,
\begin{align}
    L = [I_n \quad iI_n] \in \C^{n\times 2n}, \qquad  z=\begin{bmatrix}
        z^{(1)} \\ z^{(2)} 
    \end{bmatrix} \mapsto Lz=z^{(1)} + iz^{(2)} \in \C^n.
\end{align} 
Applying $L$ to the eigenvector $z_\mu$ separates the two cases:
\begin{align}
    Lz_\mu &= \sqrt{2}\alpha\, v,  \quad \text{if $\mu\in\sigma(U)$},\\
    \text{or} \quad Lz_\mu &= 0,  \qquad\quad\, \text{if $\mu\in\sigma(\bar U)$}.
\end{align}
Thus, when the spectrum is simple, the eigenvectors of $U$ are recovered directly by projecting the eigenvectors of $M$ and discarding the zero results.

The situation is less straightforward in the presence of repeated eigenvalues in $M$. Degeneracy may arise because $U$ itself has repeated eigenvalues, because real eigenvalues are shared by both $U$ and $\bar U$, or because complex conjugate eigenvalues of $U$ causes overlap in the embedded spectrum. 
From Lemma \ref{lemma:eigendecomposition_M} we know that $\sigma(M) = \sigma(U) \cup \sigma(\bar{U})$, the multiplicity of each eigenvalue $\mu\in\sigma(M)$ therefore satisfies
    \begin{align}\label{eq_multiplicity}
        m_M(\mu) = m_U(\mu) + m_{\bar{U}}(\mu),
    \end{align}
    with the convention that $m_{\bar U}(\mu) = 0$ if $\mu\notin \sigma(\bar U)$.
As a simple example, if $U$ has a real eigenvalue $\mu \in \R$ with multiplicity $m_U(\mu)$, then $\mu$ will also be an eigenvalue of $\bar U$ with the same multiplicity, giving $m_M(\mu)=2m_U(\mu)$. In such cases, an eigensolver may return an arbitrary basis spanning the eigenspace associated with $\mu$ that mixes contributions from both the $U$- and $\bar U$-associated subspaces. 

The following lemma shows that the above projection strategy remains valid in the general setting. An additional rank-revealing orthonormalization within each eigenspace of $Z$ is required to determine the multiplicity of each eigenvalue and the orthonormal basis vectors in the unitary eigendecomposition of $U$. 
\begin{lemma}\label{lemma:exmu}
    Let $M=Z\Sigma Z^{-1}$ be an eigendecomposition of the real-valued matrix $M$ defined in \eqref{eq:realvaluedMatrix}, and let $Z_\mu\in \C^{2n\times m_M(\mu)}$ denote the eigenvectors associated with eigenvalue $\mu\in\sigma(M)$. Define 
    \begin{align}
        X_\mu := LZ_\mu \in \C^{n\times m_M(\mu)}.
    \end{align}
    Then, $\mbox{rank}(X_\mu) = m_U(\mu)$ and $X_\mu$ spans the eigenspace of $U$ associated with $\mu$ whenever $m_U(\mu) > 0$.  
\end{lemma}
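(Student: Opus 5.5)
The plan is to identify the eigenspace of $M$ for $\mu$ through Lemma~\ref{lemma:eigendecomposition_M}, compute the action of $L$ on a basis adapted to the $U$/$\bar U$ splitting, and then transfer the result to the arbitrary basis $Z_\mu$ by an invertible change of coordinates.

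\textbf{Step 1: describe the eigenspace of $M$.} By Lemma~\ref{lemma:eigendecomposition_M}, $W=[W_+\ W_-]$ is unitary and diagonalizes $M$. The eigenspace $\mathcal{E}_\mu=\ker(M-\mu I)$ is therefore spanned by the columns of $W$ whose diagonal entry in $\mathrm{diag}(\Lambda,\bar\Lambda)$ equals $\mu$. Let $V_\mu\in\C^{n\times m_U(\mu)}$ collect the columns of $V$ with $\lambda_k=\mu$. Let $\tilde V_\mu$ collect the columns $v_k$ with $\bar\lambda_k=\mu$; it has $m_{\bar U}(\mu)$ columns. Then
\[
\mathcal{E}_\mu=\operatorname{range}\, W_\mu,\qquad W_\mu:=\frac{1}{\sqrt2}\begin{bmatrix} V_\mu & -i\overline{\tilde V_\mu}\\ -iV_\mu & \overline{\tilde V_\mu}\end{bmatrix},
\]
and $W_\mu$ has orthonormal columns. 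Counting columns gives $m_M(\mu)=m_U(\mu)+m_{\bar U}(\mu)$, in agreement with \eqref{eq_multiplicity}.

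\textbf{Step 2: change of basis.} The given $Z_\mu$ consists of $m_M(\mu)$ linearly independent vectors in $\mathcal{E}_\mu$, because $Z$ is invertible. Hence $Z_\mu=W_\mu C$ for some invertible $C\in\C^{m_M(\mu)\times m_M(\mu)}$. This is the one point requiring care: the solver's basis may be non-orthonormal and may mix the two sectors arbitrarily. That mixing is harmless, because the whole argument only needs range and rank to be invariant under right multiplication by $C$.

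\textbf{Step 3: apply $L$.} A direct computation gives $L\begin{bmatrix}v\\-iv\end{bmatrix}=v+v=2v$ and $L\begin{bmatrix}-i\bar v\\ \bar v\end{bmatrix}=-i\bar v+i\bar v=0$. Therefore
\[
LW_\mu=\bigl[\sqrt2\,V_\mu\quad 0\bigr],
\qquad
X_\mu=LZ_\mu=\bigl[\sqrt2\,V_\mu\quad 0\bigr]\,C .
\]
Since $C$ is invertible, $\operatorname{rank}X_\mu=\operatorname{rank}V_\mu=m_U(\mu)$, because $V_\mu$ consists of orthonormal columns of the unitary $V$. For the same reason, $\operatorname{range}X_\mu=\operatorname{range}V_\mu$.

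\textbf{Step 4: conclude.} It remains to check that $\operatorname{range}V_\mu$ is the full eigenspace $\ker(U-\mu I)$. This holds because $U=V\Lambda V^\dagger$ with $V$ unitary, so the eigenspace for $\mu$ is spanned exactly by the columns $v_k$ with $\lambda_k=\mu$. When $m_U(\mu)=0$ the formula still holds: $X_\mu=0$ has rank $0$.
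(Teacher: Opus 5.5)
Your proof is correct and follows essentially the same route as the paper: you write $Z_\mu$ as the $\mu$-columns of the unitary $W$ from Lemma~\ref{lemma:eigendecomposition_M} times an invertible coefficient matrix (your $C$ plays the role of the paper's $S\Pi$), then use $LW_+=\sqrt2\, V$ and $LW_-=0$ to get $X_\mu=[\sqrt2\, V_\mu\ \ 0]\,C$. Your Step 4, which checks that $\operatorname{range}V_\mu$ is the whole eigenspace $\ker(U-\mu I)$, spells out a point the paper leaves implicit.
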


\begin{proof}
By assumption, we have $MZ_\mu = \mu Z_\mu$ for each unique $\mu\in\sigma(M)$, and from Lemma \ref{lemma:eigendecomposition_M} we further know that 
\[
M[W_{+,\mu} \,W_{-,\mu}]  = \mu [W_{+,\mu} \, W_{-,\mu}],
\]
where the subscript $\mu$ collects those $m_M(\mu)$ columns of $W=[W_+\quad W_-]$ that are associated with the eigenvalue $\mu\in \sigma(M)$. 
Because $W$ is unitary and $\text{rank}(Z_\mu) = m_M(\mu)$, there exists an invertible coefficient matrix, 
\begin{align}
    S = \begin{bmatrix}
  S_{+} \\
  S_{-}
\end{bmatrix}\in \C^{m_M(\mu) \times m_M(\mu)},
\end{align}
such that
\begin{align}
   Z_{\mu} = [W_{+,\mu} \, W_{-,\mu}] \begin{bmatrix}
      S_{+} \\
      S_{-}
    \end{bmatrix} \Pi = (W_{+,\mu} S_+ + W_{-,\mu}S_-) \Pi,
\end{align}
for a permutation matrix $\Pi\in\mathbb{R}^{m_M(\mu)\times m_M(\mu)}$.
The number of columns in $W_{+,\mu}$ and $W_{-,\mu}$, and hence the number of rows of $S_+$ and $S_-$ are precisely the respective multiplicities $m_U(\mu)$ and $m_{\bar U}(\mu)$, where $m_U(\mu) + m_{\bar U}(\mu) = m_M(\mu)$ from \eqref{eq_multiplicity}. 

Now apply the projection operator $L$. Since 
$LW_- = 0$ and $LW_+ = \sqrt{2}V$, $L$ extracts the contribution of eigenvectors associated with $U$, while annihilating those corresponding to $\bar U$, 
\begin{align}
    X_{\mu} = LZ_{\mu} = \sqrt 2 V_{\mu}S_+ \Pi \quad \in \C^{n\times m_M(\mu)}.
\end{align}
If $m_U(\mu) = 0$, then the coefficient block $S_+ \in \C^{m_U(\mu)\times m_M(\mu)}$ is absent, implying that $X_\mu = 0\in \C^{n\times m_{\bar U}(\mu)}$ and hence $\mbox{rank}(X_\mu) = 0$. If $m_U(\mu) > 0$, then $S_+$ has full row rank $m_U(\mu)\geq 1$ because $S$ is non-singular, and, since the columns of $V_\mu$ are linearly independent, we conclude that $\mbox{rank}(X_\mu) = \mbox{rank}(V_\mu S_+) = m_U(\mu)$ and $X_{\mu}$ spans the same space as $V_{\mu}$.
\end{proof}

Lemma \ref{lemma:exmu} shows that $\mbox{range}(X_{\mu}) = \mbox{range}(V_\mu)$, which is zero if and only if $\mu\notin\sigma(U)$. A rank-revealing orthonormalization of $X_{\mu}$ therefore either recovers $m_U(\mu)\geq 1$ orthonormal eigenvectors when $\mu\in\sigma(U)$, or yields rank zero in which case $\mu\notin\sigma(U)$. Since the eigenspaces of a normal matrix corresponding to different eigenvalues are mutually orthogonal, 
orthonormalizing $X_{\mu} = LZ_{\mu}$ for each unique eigenvalue $\mu\in\sigma(M)$ yields an orthonormal eigenbasis for $U$, together with the correct eigenvalue multiplicities. 

Algorithm \ref{alg:evecs} summarizes the reconstruction of an orthonormal basis of the eigen\-space of $U$ from the eigendecomposition of $M$. The algorithm iterates over the unique eigenvalues $\mu\in\sigma(M)$, which first requires a grouping of the eigenvalues. 
Once the eigenspaces are grouped, the reconstruction consists of applying the projection $L$ and performing a rank-revealing orthonormalization within each projected subspace. 

\begin{algorithm} 
  \caption{Reconstruction of a unitary eigendecomposition of $U$ from $M$}\label{alg:evecs}
  \begin{algorithmic}[1]
    \State \textbf{Inputs:} Eigendecomposition $M=Z\Sigma Z^{-1}$ 
    \State \textbf{Outputs:} Unitary matrix $V$ and diagonal matrix $\Lambda$ such that $U=V\Lambda V^\dagger$
    \State Initialize $V \leftarrow [\,]$, $\Lambda \leftarrow [\,]$
    \For{each unique eigenvalue $\mu\in \sigma(M)$} 
        \State Extract eigenvectors $Z_{\mu} \in C^{2n \times m_M(\mu)}$
        \State Compute $X_\mu = LZ_{\mu} =  Z_{\mu}^{(1)} + iZ_{\mu}^{(2)}$
        \State Compute a rank-revealing orthonormalization of $X_{\mu}$ 
        \If{$\mbox{rank}(X_\mu) > 0$}
            \State Let $Q_{\mu}$ be an orthonormal basis for $\mbox{range}(X_\mu)$
            \State Append columns of $Q_{\mu}$ to $V$
            \State Append $\mu$ to diagonal entries of $\Lambda$ with multiplicity equal to $\mbox{rank}(X_\mu)$
        \EndIf
    \EndFor
  \end{algorithmic}
\end{algorithm}

\section{Conclusion}
We have shown how to recover an orthonormal eigendecomposition of a complex unitary matrix $U$ from the eigendecomposition of its real-valued embedding $M$. While the reconstruction is immediate when the spectrum is simple, degeneracies obscure the correspondence between the eigenspaces associated with $U$ and those associated with $\bar U$.
The main result establishes that this ambiguity can always be resolved by a simple projection,
followed by a rank-revealing orthonormalization within each eigenspace of $M$. The resulting procedure is straightforward to implement, compatible with existing real-valued eigensolvers, and provides a practical bridge between real-arithmetic computational workflows and spectral analysis of unitary matrices. 

\section*{Acknowledgments}
The authors gratefully acknowledge financial support from the Office of Science, Advanced Scientific Computing Research (ASCR) within the U.S.~Department Of Energy, award \#~SCW1895. The first author thanks Dr.~Mohammed Azeem Sheik for fruitful discussion during the development of this manuscript. This work was performed under the auspices of the U.S. Department of Energy by Lawrence Livermore National Laboratory under Contract DE-AC52-07NA27344. This is contribution LLNL-JRNL-2019410.  

\bibliographystyle{siamplain}
\bibliography{references.bib}

@article{Freund1992CG,
author = {Freund, Roland W.},
title = {Conjugate Gradient-Type Methods for Linear Systems with Complex Symmetric Coefficient Matrices},
journal = {SIAM Journal on Scientific and Statistical Computing},
volume = {13},
number = {1},
pages = {425-448},
year = {1992},
doi = {10.1137/0913023},
eprint = {https://doi.org/10.1137/0913023}
}

@article{Day2001ERF,
author = {Day, David and Heroux, Michael A.},
title = {Solving Complex-Valued Linear Systems via Equivalent Real Formulations},
journal = {SIAM Journal on Scientific Computing},
volume = {23},
number = {2},
pages = {480-498},
year = {2001},
doi = {10.1137/S1064827500372262}
}

@article{axelsson2014comparison,
  title={A comparison of iterative methods to solve complex valued linear algebraic systems},
  author={Axelsson, Owe and Neytcheva, Maya and Ahmad, Bashir},
  journal={Numerical Algorithms},
  volume={66},
  number={4},
  pages={811--841},
  year={2014},
  publisher={Springer}
}

@book{golub2013matrix,
  title={Matrix Computations},
  author={Golub, Gene H and Van Loan, Charles F},
  year={2013},
  publisher={Johns Hopkins University Press},
  address={Baltimore},
  edition={4th},
  isbn={978-1-4214-0794-4}
}

@book{nielsen00,
  author = {Nielsen, Michael A. and Chuang, Isaac L.},
  publisher = {Cambridge University Press},
  title = {Quantum Computation and Quantum Information},
  year = 2000
}

@article{Reck-94,
  title = {Experimental realization of any discrete unitary operator},
  author = {Reck, Michael and Zeilinger, Anton and Bernstein, Herbert J. and Bertani, Philip},
  journal = {Phys. Rev. Lett.},
  volume = {73},
  issue = {1},
  pages = {58--61},
  numpages = {0},
  year = {1994},
  month = {Jul},
  publisher = {American Physical Society},
  doi = {10.1103/PhysRevLett.73.58},
  url = {https://link.aps.org/doi/10.1103/PhysRevLett.73.58}
}

@book{oppenheim2010discrete,
  title={Discrete-Time Signal Processing},
  author={Oppenheim, Alan V. and Schafer, Ronald W.},
  isbn={978-0131988422},
  edition={3rd},
  year={2010},
  publisher={Pearson},
  address={Upper Saddle River, NJ}
}

@Article{Hernandez:2005,
  author  = "Vicente Hernandez and Jose E. Roman and Vicente Vidal",
  title   = "{SLEPc}: A scalable and flexible toolkit for the solution of eigenvalue problems",
  journal = "ACM Trans. Math. Software",
  volume  = "31",
  number  = "3",
  pages   = "351--362",
  year    = "2005",
  doi     = "10.1145/1089014.1089019"
}

@book{lehoucq1998arpack,
  author    = {Lehoucq, R. B. and Sorensen, D. C. and Yang, C.},
  title     = {ARPACK Users' Guide: Solution of Large-Scale Eigenvalue Problems with Implicitly Restarted Arnoldi Methods},
  publisher = {Society for Industrial and Applied Mathematics (SIAM)},
  year      = {1998},
  address   = {Philadelphia, PA},
  isbn      = {978-0-89871-407-4},
  url       = {https://epubs.siam.org/doi/book/10.1137/1.9780898719628}
}

@book{scalapack97,
  author = {L. S. Blackford and J. Choi and A. Cleary and E. D'Azevedo and 
            J. Demmel and I. Dhillon and J. Dongarra and S. Hammarling and 
            G. Henry and A. Petitet and K. Stanley and D. Walker and R. C. Whaley},
  title = {ScaLAPACK Users' Guide},
  publisher = {Society for Industrial and Applied Mathematics},
  year = {1997},
  address = {Philadelphia, PA},
  isbn = {0-89871-400-5},
  url = {http://www.netlib.org/scalapack/slug/},
  note = {Also available at \url{https://epubs.siam.org/doi/10.1137/1.9780898719642}}
}

\end{document}